\newtheorem{Thm}[equation]{Theorem}
\newtheorem{Prop}[equation]{Proposition}
\theoremstyle{remark}
\newtheorem{Def}[equation]{Definition}
\newtheorem{Rem}[equation]{Remark}
\newcommand{\nc}{\newcommand}
\nc{\dmo}{\DeclareMathOperator}
\dmo{\coker}{coker}
\dmo{\cone}{cone}
\dmo{\Ind}{Ind}
\dmo{\modname}{mod}%
\dmo{\Res}{Res}
\nc{\ie}{{\sl i.e.,}\ }
\nc{\kk}{\Bbbk}
\nc{\mmod}{\text{-}\!\modname}%
\nc{\onto}{\twoheadrightarrow}%
\nc{\sbull}{{\scriptscriptstyle\bullet}}%
\renewcommand{\le}{\leqslant}
\renewcommand{\ge}{\geqslant}
\begin{document}


\title{Resolutions by permutation modules}
\author{Paul Balmer}
\author{Dave Benson}
\date{2020 March 9}

\address{Paul Balmer, Mathematics Department, UCLA, Los Angeles, CA 90095-1555, USA}
\email{balmer@math.ucla.edu}
\urladdr{http://www.math.ucla.edu/$\sim$balmer}

\address{Dave Benson, Institute of Mathematics, University of Aberdeen, King's College, Aberdeen AB24 3UE, Scotland U.K.}
\email{d.j.benson@abdn.ac.uk}
\urladdr{https://homepages.abdn.ac.uk/d.j.benson/pages/}

\begin{abstract}
We prove that, up to adding a complement, every modular representation of a finite group admits a finite resolution by permutation modules.
\end{abstract}

\subjclass[2010]{}
\keywords{}

\thanks{First-named author supported by NSF grant~DMS-1901696.}

\maketitle


Let $G$ be a finite group and $\kk$ be a field of
characteristic~$p>0$ dividing the order of~$G$.
It is well-known that if $G$ has non-cyclic Sylow $p$-subgroups,
the $\kk$-linear representation theory of~$G$ is complicated.
In particular, the Krull--Schmidt abelian category,
$\kk G\mmod$, of finite-dimensional $\kk G$-modules
admits \emph{infinitely many} isomorphism classes of
indecomposable objects. On the other hand, there is a
much simpler class of $\kk G$-modules, the
\emph{permutation modules}, \ie those isomorphic to
$\kk X$ for $X$ a finite $G$-set. The \emph{finite} collection
$\{\kk(G/H)\}_{H\le G}$ additively generates all such modules.

For a $\kk G$-module $M\in\kk G\mmod$, we want to
analyze the existence of what we'll call a
\emph{permutation resolution} for short, \ie an exact sequence
\begin{equation}
\label{eq:resol-intro}%
0\to P_n\to P_{n-1}\to \cdots \to P_1 \to P_0 \to M\to 0
\end{equation}
where all~$P_i$ are permutation modules. Up to direct summands, it is always possible:
\begin{Thm}
\label{thm:resol-intro}%
Let $G$ be a finite group and $M\in\kk G\mmod$.
Then there exists a $\kk G$-module~$N$ such that
$M\oplus N$ admits a finite resolution~\eqref{eq:resol-intro}
by permutation modules.
\end{Thm}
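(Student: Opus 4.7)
The strategy is to reduce to the case when $G$ is a $p$-group by a Sylow restriction--induction argument, and then exploit that over a $p$-group the trivial module is the unique simple and is itself a permutation module.

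\emph{Sylow reduction.} Let $S$ be a Sylow $p$-subgroup of $G$. Since $[G:S]$ is invertible in $\kk$, averaging splits the evaluation map $\Ind^G_S \Res^G_S M \onto M$, exhibiting $M$ as a direct summand of $\Ind^G_S \Res^G_S M$. The functor $\Ind^G_S$ is exact and sends the permutation $\kk S$-module $\kk(S/K)$ to the permutation $\kk G$-module $\kk(G/K)$. So a finite permutation resolution of $\Res^G_S M \oplus N'$ over $\kk S$ induces to one of $\Ind^G_S \Res^G_S M \oplus \Ind^G_S N'$ over $\kk G$, and, writing $\Ind^G_S \Res^G_S M = M \oplus M''$, becomes a finite permutation resolution of $M \oplus N$ with $N = M'' \oplus \Ind^G_S N'$. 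This reduces the theorem to the case where $G$ is a $p$-group.

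\emph{The $p$-group case.} Assume $G$ is a $p$-group. The trivial module $\kk = \kk(G/G)$ is the unique simple $\kk G$-module and is itself a permutation module. Every $M \in \kk G\mmod$ admits a composition series with subquotients isomorphic to $\kk$, so induction on composition length reduces the problem to showing that the class of modules admitting a finite permutation resolution up to summand is closed under extensions.

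\emph{The main obstacle.} The principal difficulty is closure under extensions. Given $0 \to A \to M \to C \to 0$ with permutation resolutions $P^A_\bullet \to A \oplus N_A$ and $P^C_\bullet \to C \oplus N_C$, one wants to build a horseshoe-style resolution of $M \oplus N_M$. The classical horseshoe requires lifting $P^C_0 \to C$ along $M \to C$. For each summand $\kk(G/H) \subseteq P^C_0$ this corresponds to lifting on $H$-fixed points, with obstruction in $H^1(H,A)$. The obstruction vanishes for the projective permutation module $\kk G = \kk(G/1)$, so replacing troublesome $\kk(G/H)$ summands by $\kk G$, and absorbing the discrepancy into an enlarged complement $N_M$, should force liftability at each stage. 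Equivalently, working in $D^b(\kk G)$, the thick subcategory generated by the permutation modules contains $\kk$ and is closed under cones, hence, by the composition-series triangles, contains every $M$; idempotent-completeness of $D^b(\kk G)$ then realizes each $M$ as a direct summand of a bounded complex of permutation modules, which is the desired resolution up to summand.
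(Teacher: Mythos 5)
Your Sylow reduction is correct and is exactly the paper's first step. After that, the proposal diverges from the paper, and both of the arguments you sketch for the $p$-group case have genuine gaps.

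The reduction ``filter $M$ by trivial composition factors, so it suffices to show closure under extensions'' is not how the paper proceeds, and for good reason: if that route worked one would obtain the theorem with $N=0$, which the paper does not claim. The paper instead introduces a strictly stronger notion, a \emph{good} permutation resolution, meaning one that can be made \emph{free} in degrees $0,\dots,m$ for every prescribed $m$. That extra freeness is exactly what makes the two-out-of-three lemma (Proposition~\ref{prop:2-out-of-3}) provable: given a permutation resolution $P_\sbull\to M$ of length $\ell$ and a permutation resolution $Q_\sbull\to L$ that is free in degrees $0,\dots,\ell$, one can lift $L\to M$ to a chain map $Q_\sbull\to P_\sbull$ because the relevant $Q_j$ are projective, and then take the cone. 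Your horseshoe sketch identifies the obstruction (permutation modules are not projective, so $P^C_0\to C$ need not lift along $M\onto C$) but your proposed fix, ``replace troublesome $\kk(G/H)$ summands by $\kk G$ and absorb the discrepancy into $N_M$,'' is not an argument: replacing a summand of $P^C_0$ destroys exactness of the complex, and you do not explain how to repair the higher degrees or why the process terminates. The paper's device of controlling freeness degree by degree, uniformly in $m$, is precisely what is missing.

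The derived-category paragraph has a more basic error. You are conflating two different subcategories of $D^b(\kk G)$: the thick subcategory generated by permutation modules, and the closure under direct summands of the essential image of $K^b(\text{perm})\to D^b(\kk G)$. The first is trivially all of $D^b(\kk G)$, since $\kk=\kk(G/G)$ is a permutation module and $\kk$ generates $D^b(\kk G)$ over a $p$-group; this says nothing. The theorem is about the second, which is genuinely smaller a priori: when you take a cone inside $D^b(\kk G)$ you use a morphism in the derived category, and such a morphism between bounded complexes of (non-projective) permutation modules need not be represented by an actual chain map of such complexes, because $\mathrm{Hom}_{K^b}\to\mathrm{Hom}_{D^b}$ is not surjective in general. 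So ``closed under cones'' fails for the class you actually care about, and idempotent-completeness of $D^b(\kk G)$ does not bridge the gap.

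Finally, even once the two-out-of-three machinery is in place, the paper still needs nontrivial input that your proposal omits entirely. It first proves directly that over an elementary abelian $p$-group the trivial module admits good permutation resolutions, by tensoring the inflated $2$-periodic $C_p$-resolutions across the coordinate factors (Proposition~\ref{prop:abelem}); the composition-series reduction is then valid for elementary abelian groups because the base case is established there. For a general $p$-group the paper does not filter $M$ by trivial modules; it invokes Carlson's theorem~\cite{Carlson00}, which produces the complement $N$ and a filtration of $M\oplus N$ whose subquotients are induced from elementary abelian subgroups, and then applies exactness of induction plus two-out-of-three. Carlson's theorem is where the summand $N$ actually comes from, and it is the ingredient your proof has no substitute for.
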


The related problem of  resolutions~\eqref{eq:resol-intro} that are
not only exact but remain exact under all fixed-point functors
has been recently discussed in~\cite{BoucStancuWebb17}.
Allowing $p$-permutation modules $P_i$ (that is, direct
summands of permutation modules), Bouc--Stancu--Webb
prove that such resolutions exist for all~$M$ if and only if~$G$
has a Sylow subgroup that is either cyclic or dihedral (for $p=2$).

Unsurprisingly, \Cref{thm:resol-intro} reduces to a Sylow subgroup
$S$ of~$G$, since every $M$ is a direct summand
of~$\Ind_S^G\Res^G_S(M)$ and since the functor $\Ind_S^G$ is
exact and preserves permutation modules. So we focus on the case
where $G$ is a $p$-group.

For the proof, we shall consider a stronger property:
\begin{Def}
\label{def:good-resol}%
We say that a resolution~\eqref{eq:resol-intro} is
\emph{free up to degree~$m\ge 0$} if $P_i$ is a free module
for $i=0,\ldots,m$. We say that $M$ admits
\emph{good permutation resolutions} if for every integer $m\ge 0$,
there exists a finite resolution~\eqref{eq:resol-intro} by permutation
modules that is free up to degree~$m$.
\end{Def}

\begin{Rem}
\label{rem:good-resol}%
Let $G$ be a $p$-group. A $\kk G$-module $M$ admits good
permutation resolutions if and only if for all~$m\ge 1$ the
$m$th Heller loop $\Omega^mM$ admits a finite permutation
resolution. Also, if $Q$ is free and $M\oplus Q$ admits a
permutation resolution as in~\eqref{eq:resol-intro} then the
epimorphism $P_0\onto M\oplus Q \onto Q$ forces $Q$ to
be a direct summand of~$P_0$ and one can remove
$0\to Q\xrightarrow{=} Q \to 0$ from the resolution.
So if $M\oplus Q$ has a permutation resolution that is free
up to degree~$m$ then so does~$M$.
\end{Rem}

An advantage of good permutation resolutions is the \emph{two out of three property}:
\begin{Prop}
\label{prop:2-out-of-3}%
Let $G$ be a $p$-group. Let $0\to L\to M\to N\to 0$ be an
exact sequence of $\kk G$-modules. If two out of $L$, $M$
and $N$ have good permutation resolutions then so does the third.
\end{Prop}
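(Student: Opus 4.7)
The plan is to reduce the three orientations of the 2-out-of-3 statement to a single sub-claim handled by a horseshoe-style construction.

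\textbf{Sub-claim.} In a short exact sequence $0\to X\to Y\to Z\to 0$, if $X$ admits a finite permutation resolution of length $n$ and $Z$ admits a permutation resolution free up to degree $n$, then $Y$ admits a finite permutation resolution.

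Granted the sub-claim, the case where $L$ and $N$ are good runs as follows. By \Cref{rem:good-resol} it suffices to exhibit, for each $k\ge 0$, a finite permutation resolution of $\Omega^k M$. The iterated syzygy construction (horseshoe applied to free covers) yields a short exact sequence $0\to\Omega^k L\to X_k\to\Omega^k N\to 0$ with $X_k$ isomorphic to $\Omega^k M$ up to a free summand. Since $L$ is good, $\Omega^k L$ has a finite permutation resolution of some length $n_k$; since $N$ is good, so is $\Omega^k N$, which then admits a permutation resolution free up to degree $n_k$. The sub-claim produces a finite permutation resolution of $X_k$, and \Cref{rem:good-resol} transfers it to $\Omega^k M$.

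The two remaining cases reduce to the first. If $M$ and $N$ are good, pulling back a free cover $F\onto\Omega^k N$ along $\Omega^k M\to\Omega^k N$ yields $0\to\Omega^{k+1}N\to\Omega^k L\oplus F\to\Omega^k M\to 0$, whose outer terms are good as syzygies of $N$ and $M$; the first case gives $\Omega^k L\oplus F$, hence $\Omega^k L$, good, so $L$ is good. If $L$ and $M$ are good, a free cover $F\onto\Omega^k M$ composed with $\Omega^k M\to\Omega^k N$ has kernel fitting in $0\to\Omega^{k+1}M\to K\to\Omega^k L\to 0$ with both ends good; the first case makes $K$ good, and prepending $F$ realises a finite permutation resolution of $\Omega^k N$, so $N$ is good.

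The real obstacle is the sub-claim. Horseshoe assembles $P^Y_i=P^X_i\oplus P^Z_i$ with a coupling differential $\phi_i\colon P^Z_i\to P^X_{i-1}$ chosen at each step; the freeness hypothesis on $Z$'s resolution provides the projectivity needed for the required lifts during the first $n+1$ steps. Past degree $n$ one has $P^X_i=0$, so the running kernel of the partial resolution of $Y$ coincides with that on the $Z$-side, and the tail of $P^Z_\bullet$ can be appended. The delicate check is solving the transitional identity $d^X_n\phi_{n+1}=-\phi_n d^Z_{n+1}$: this is possible because the chain complex identity at the previous step forces $\phi_n(\ker d^Z_n)$ to lie inside the image of $d^X_n$, and that image admits a section since $d^X_n$, being the top of $X$'s resolution, is injective.
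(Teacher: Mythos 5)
Your proof is correct in substance but takes a genuinely different route from the paper. The paper rotates the short exact sequence (replacing $0\to L\to M\to N\to 0$ by $0\to\Omega^1 N\to L\oplus P\to M\to 0$) so as to reduce, once and for all, to the single case ``$L$ and $M$ good $\Rightarrow N$ good,'' and there it lifts $L\to M$ to a chain map $f_\sbull\colon Q_\sbull\to P_\sbull$ (using freeness of $Q_j$ for $j\le\ell$) and takes the mapping cone, which resolves $N$ and is free up to degree~$m$. You instead reduce to the dual case ``$L$ and $N$ good $\Rightarrow M$ good'' and build the middle resolution by a truncated horseshoe $P^Y_i=P^X_i\oplus P^Z_i$, handling the other two cases by separate pullback/pushout reductions rather than one uniform rotation. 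Both constructions turn on exactly the same freeness-up-to-a-bounded-degree idea, and the horseshoe is the evident dual of the cone; the paper's version is somewhat tighter because the comparison lemma and cone do all the bookkeeping in one stroke, whereas your route needs the explicit coupling maps $\phi_i$ and the transitional identity at degree $n{+}1$.

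On that transitional identity: your justification ``that image admits a section since $d^X_n$ \dots is injective'' is misphrased. Injectivity of $d^X_n\colon P^X_n\to P^X_{n-1}$ does not make $\mathrm{im}(d^X_n)$ a direct summand of $P^X_{n-1}$, and you cannot invoke projectivity of $P^Z_{n+1}$ either, since you only assumed $P^Z_\sbull$ free up to degree~$n$. What you actually need — and in essence have — is that $\phi_n d^Z_{n+1}$ lands inside $\mathrm{im}(d^X_n)=\ker(d^X_{n-1})$ by the degree-$n$ chain identity together with exactness of $P^X_\sbull$, and that $d^X_n$ is an isomorphism onto its image; thus $\phi_{n+1}$ is uniquely determined by $\phi_{n+1}=(d^X_n)^{-1}(-\phi_n d^Z_{n+1})$, with no ambient section required. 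This is precisely the same role injectivity plays in the paper's argument: in diagram~\eqref{eq:resol-qi} the top lift $f_\ell\colon Q_\ell\to P_\ell$ exists uniquely and is automatically compatible with $d^Q_{\ell+1}$ because $d^P_\ell$ is injective.
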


\begin{proof}
If $P\onto N$ is a projective cover, we obtain by
`rotation' an exact sequence
$0\to \Omega^1N\to L\oplus P\to M\to 0$. In view of
\Cref{rem:good-resol}, we can rotate in this way and reduce to
the case where $L$ and~$M$ admit good permutation resolutions
and then prove that~$N$ does. Let $m\ge 0$. Choose
$P_\sbull\to M$ a permutation resolution of~$M$ that is free up to
degree~$m$. Let $\ell\ge m$ be such that $P_i=0$ for all $i>\ell$.
Now choose $Q_\sbull\to L$ a permutation resolution of~$L$ that
is free up to degree~$\ell$. We have the following picture
(plain part) with exact rows:
\begin{equation}
\label{eq:resol-qi}%
\vcenter{\xymatrix@C=1em@R=1em{
0\ar[r]
& Q_n \ar[r] \ar@{..>}[d]
& \cdots  \ar[r]
& Q_{\ell+1} \ar[r] \ar@{..>}[d]
& Q_\ell \ar[r] \ar@{..>}[d]
& \cdots  \ar[r]
& Q_0  \ar[r] \ar@{..>}[d]
& L \ar[r] \ar[d]
& 0
\\
0\ar[r]
& 0 \ar[r]
& \cdots  \ar[r]
& 0 \ar[r]
& P_\ell \ar[r]
& \cdots \ar[r]
& P_0  \ar[r]
& M \ar[r]
& 0
}}
\end{equation}
The standard lifting argument, using that $Q_j$ is projective for
$j=0,\ldots, \ell$ shows that there exists a lift
$f_\sbull\colon Q_\sbull\to P_\sbull$ of the morphism $L\to M$.
Then the mapping cone complex $\cone(f_\sbull)$ yields a
resolution of $\coker(L\to M)=N$ and this complex~$\cone(f_\sbull)$
has free objects in degree~$0,\ldots,m$ since $P_\sbull$ and~$Q_\sbull$ do.
\end{proof}

Let us discuss an example of \Cref{thm:resol-intro}, where we can
even take $N=0$.
\begin{Prop}
\label{prop:abelem}%
Let $E=(C_p)^{\times r}=C_p\times\cdots\times C_p$ be an
elementary abelian group of rank~$r$. Then every $\kk E$-module
admits good permutation resolutions.
\end{Prop}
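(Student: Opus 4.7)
The plan is to reduce the statement, via the two-out-of-three property (\Cref{prop:2-out-of-3}) and induction on $\dim_\kk M$, to the single case of the trivial module $M = \kk$, and then to construct good permutation resolutions for $\kk$ by a tensor product across the cyclic factors of~$E$.

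First I would settle the rank-one case. Over $\kk C_p$ the length-three sequence
\[
0 \to \kk \to \kk C_p \xrightarrow{N} \kk C_p \xrightarrow{g-1} \kk C_p \to \kk \to 0
\qquad \bigl(N = 1 + g + \cdots + g^{p-1}\bigr)
\]
is a finite permutation resolution of~$\kk$, and $0 \to \kk \to \kk C_p \to \Omega\kk \to 0$ is one of $\Omega\kk$. Because every non-projective indecomposable $\kk C_p$-module is $2$-periodic, each Heller loop $\Omega^m \kk$ (for $m \ge 1$) is isomorphic, up to a free summand, to $\kk$ or to $\Omega\kk$, and hence admits a finite permutation resolution. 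By \Cref{rem:good-resol}, $\kk$ admits good permutation resolutions over $\kk C_p$.

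Writing $E = C_{p,1} \times \cdots \times C_{p,r}$ and fixing $m \ge 0$, I would next choose for each~$i$ a finite permutation resolution $P^\bullet_i \to \kk$ over $\kk C_{p,i}$ that is free up to degree~$m$. Since we work over a field, the tensor product $P^\bullet := P^\bullet_1 \otimes_\kk \cdots \otimes_\kk P^\bullet_r$ is a finite resolution of $\kk$ as a $\kk E$-module, and each of its terms $P_d = \bigoplus_{d_1 + \cdots + d_r = d} P^1_{d_1} \otimes_\kk \cdots \otimes_\kk P^r_{d_r}$ is a direct sum of permutation modules, via the identification $\kk(C_{p,1}/H_1) \otimes_\kk \cdots \otimes_\kk \kk(C_{p,r}/H_r) \cong \kk(E/(H_1 \times \cdots \times H_r))$. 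When $d \le m$ every index $d_i$ is at most~$m$, so each factor $P^i_{d_i}$ is $\kk C_{p,i}$-free by construction; hence every $H_i$ equals~$1$, and the summand is $\kk E$-free. Thus $P^\bullet \to \kk$ is a permutation resolution free up to degree~$m$, so $\kk$ admits good permutation resolutions over $\kk E$.

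Finally I would conclude by induction on $\dim_\kk M$. The cases $\dim M \le 1$ are trivial or reduce to $M = \kk$ (the unique simple $\kk E$-module, since $E$ is a $p$-group). For $\dim M > 1$, any simple submodule $L \cong \kk$ gives a short exact sequence $0 \to L \to M \to M/L \to 0$ with outer terms of strictly smaller dimension; both admit good permutation resolutions by induction, and \Cref{prop:2-out-of-3} then transfers this to~$M$. The one delicate point will be verifying that the tensor construction preserves freeness in low degrees, which reduces to the factor-by-factor identity that $\kk(E/(H_1 \times \cdots \times H_r))$ is $\kk E$-free exactly when every $H_i = 1$.
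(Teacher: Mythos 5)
Your proposal is correct and follows essentially the same route as the paper: build good permutation resolutions of the trivial module $\kk$ by tensoring truncations of the $2$-periodic resolution over each cyclic factor (so that in total degree $\le m$ every tensor factor is free), then handle a general $M$ by filtering with trivial quotients and applying \Cref{prop:2-out-of-3}. One small slip in the rank-one step: for $p>2$ the displayed sequence $0\to\kk\to\kk C_p\xrightarrow{N}\kk C_p\xrightarrow{g-1}\kk C_p\to\kk\to 0$ is not exact, since $\ker(N)$ is the whole augmentation ideal of dimension $p-1$; the correct truncations alternate $g-1,N,g-1,\dots,N,g-1$ and so contain an \emph{even} number of copies of $\kk C_p$, e.g.\ $0\to\kk\to\kk C_p\xrightarrow{g-1}\kk C_p\to\kk\to 0$, and taking such a truncation of odd length $\ge m$ already gives a permutation resolution of $\kk$ over $\kk C_p$ free up to degree $m$ directly, making the detour through $\Omega^m\kk$ and \Cref{rem:good-resol} unnecessary.
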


\begin{proof}
Consider for each $1\le i\le r$ the (`coordinate-wise') subgroup
\[ H_i=C_p\times \cdots \times C_p\times 1\times C_p\times\cdots\times C_p \]
of rank~$r-1$. Let $m\ge 0$. Inflating from $E/H_i\simeq C_p$ the
usual $2$-periodic resolutions
$0\to \kk\to \kk C_p\to \cdots \to\kk C_p\to \kk\to 0$
of length at least~$m$, we obtain quasi-isomorphisms of
$\kk E$-modules $s_i\colon Q(i)\to \kk[0]$ where the $Q(i)$
are defined as follows:
\[
\vcenter{\xymatrix@R=1em@C=1em{
Q(i):= \ar@<-.8em>[d]_-{s_i}
&& 0 \ar[r]
& \kk \ar[r] \ar[d]
& \kk(E/H_i) \ar[r] \ar[d]
& \cdots \ar[r]
& \kk(E/H_i) \ar[r] \ar[d]
& \kk(E/H_i)  \ar[r] \ar[d]
& 0
\\
\kk[0]=
&& 0\ar[r]
& 0 \ar[r]
& 0 \ar[r]
& \cdots \ar[r]
& 0 \ar[r]
& \kk \ar[r]
& 0
}}
\]
Tensoring all the above, we obtain a quasi-isomorphism
\[ s_1\otimes \cdots \otimes s_r\colon P_\sbull:=
Q(1)\otimes \cdots \otimes Q(r)\to (\kk[0])^{\otimes{r}}\cong
\kk[0], \]
\ie a permutation resolution $P_\sbull$ of~$\kk$. In other words,
we performed an `external tensor' of all the periodic resolutions
over each copy of~$C_p$ in~$E$. Since the Mackey formula gives
by induction $\kk(E/H_{i_1})\otimes \cdots \otimes \kk(E/H_{i_n})\cong
\kk(E/(H_{i_1}\cap \cdots \cap H_{i_n}))$, we have produced a permutation
resolution $P_\sbull$ of~$\kk$ that is easily seen to be free up to
degree~$m$. As $m\ge 0$ was arbitrary, we proved that the trivial
module~$\kk$ admits good permutation resolutions. A general
module $M\in \kk E\mmod$ admits a filtration whose successive
quotients are trivial. We therefore conclude by induction,
via \Cref{prop:2-out-of-3}.
\end{proof}

\begin{Rem}
The proof of \Cref{prop:abelem} shows that the stabilisers in the
permutation resolution may be taken to be products of subsets with
respect to the given decomposition of $E$. Applying the proposition to
a module and its dual shows that given a module $M$ we may form a finite
exact complex of permutation modules with these stabilisers in such
a way that the image of one of the maps is~$M$. This should be compared
with the main theorem of~\cite{BensonCarlson} which shows that
a finite exact sequence of permutation $E$-modules in which the set
of stabilisers has no containment of index $p$ necessarily splits, so that
the image of every map is again a permutation module.
\end{Rem}

\begin{proof}[Proof of \Cref{thm:resol-intro}]
As already mentioned, we can reduce to the case where $G$ is a $p$-group. By \cite{Carlson00}, we know that for every $\kk G$-modules~$M$, there exists a $\kk G$-module~$N$ and a finite filtration $0=L_0\subset L_1\subset \cdots \subset L_s=M\oplus N$ such that every $L_i/L_{i-1}$ is induced from some elementary abelian subgroup~$E_i\le G$. Since the result holds for elementary abelian groups (\Cref{prop:abelem}) and is stable by induction, we see that all $L_i/L_{i-1}$ admit good permutation resolutions. By \Cref{prop:2-out-of-3}, we conclude that so does $M\oplus N$. In particular, $M\oplus N$ has a permutation resolution.
\end{proof}

\medbreak
\noindent
\textbf{Acknowledgements:} The authors are grateful to Serge Bouc, Martin Gallauer and Peter Webb for useful discussions. The authors would like to thank the Isaac Newton Institute for Mathematical Sciences, Cambridge, for support and hospitality during the programmes `K-theory, algebraic cycles and motivic homotopy theory' and `Groups, representations and applications: new perspectives', where work on this paper was undertaken. The Isaac Newton Institute is supported by EPSRC grant no EP/R014604/1.



\end{document}